\DeclareMathOperator{\aut}{Aut}
\DeclareMathOperator{\cay}{Cay}
\DeclareMathOperator{\cyc}{Cyc}
\DeclareMathOperator{\iso}{Iso}
\DeclareMathOperator{\orb}{Orb}
\DeclareMathOperator{\rk}{rk}
\DeclareMathOperator{\Span}{Span}
\DeclareMathOperator{\sym}{Sym}
\DeclareMathOperator{\rad}{rad}
\DeclareMathOperator{\alg}{Alg}
\def\@seccntformat#1{\csname the#1\endcsname. } 
\def\@biblabel#1{#1.}
\title{On separable Schur rings over abelian groups}
\author{Grigory Ryabov}
\address{Sobolev Institute of Mathematics, Novosibirsk, Russia}
\address{Novosibirsk State University, Novosibirsk, Russia}
\email{gric2ryabov@gmail.com}
\thanks{The work is supported by the Russian Foundation for Basic Research (project 18-01-00752)}
\date{}
\newtheorem{prop}{Proposition}[section]
\newtheorem{theo}{Theorem}[section]
\newtheorem{lemm}[prop]{Lemma}
\theoremstyle{definition}
\begin{document}

\vspace{\baselineskip}
\vspace{\baselineskip}

\vspace{\baselineskip}

\vspace{\baselineskip}

\begin{abstract}
A finite group is said to be  \emph{weakly separable} if every algebraic isomorphism between two $S$-rings over this group is induced by a combinatorial isomorphism. In the paper we prove that every abelian weakly separable group  belongs to one of several explicitly given families only.
\\
\\
\textbf{Keywords}: Isomorphisms, Schur rings, abelian groups.
\\
\textbf{MSC}:05E30, 05C60, 20B35.
\end{abstract}

\maketitle

\section{Introduction}
A Schur ring or $S$-ring over a finite group $G$ can be defined as a subring of the group ring $\mathbb{Z}G$ that is a free $\mathbb{Z}$-module spanned by a partition of $G$ closed under taking inverse and containing the identity element of $G$ as a class (see Subsection~2.1 for the exact definition). The theory of $S$-rings was initiated by Schur~\cite{Schur} and later developed by  Wielandt~\cite{Wi} and his followers. 

Let $\mathcal{A}$ and $\mathcal{A}^{'}$ be $S$-rings over groups $G$ and $G^{'}$ respectively. An \emph{algebraic isomorphism} from $\mathcal{A}$ to $\mathcal{A}^{'}$ is defined to be a ring isomorphism of them. A \emph{(combinatorial) isomorphism} from $\mathcal{A}$ to $\mathcal{A}^{'}$ is defined to be an isomorphism of the corresponding Cayley schemes (see Subsection~2.2). One can check that every combinatorial isomorphism induces the algebraic one. However the converse statement is not true, see e.g.~\cite{EP1}.  

Let $\mathcal{K}$ be a class of groups. Following~\cite{EP3}, we say that an $S$-ring $\mathcal{A}$ is \emph{separable} with respect to $\mathcal{K}$ if every algebraic isomorphism from $\mathcal{A}$ to an $S$-ring over a group from $\mathcal{K}$ is induced by a combinatorial one. Note that if $\mathcal{A}$ is separable with respect to $\mathcal{K}$ then $\mathcal{A}$ is determined up to isomorphism in the class of $S$-rings over groups from $\mathcal{K}$ by the tensor of its structure constants (with respect to the basis corresponding to the partition of the underlying group). So the question when an $S$-ring is separable is a particular case of the following general question arising in different parts of combinatorics:  when a combinatorial structure is determined up to isomorphism  by its parameters? For more details see~\cite{EP3,CP}.

A finite group $G$ is said to be \emph{separable} with respect to $\mathcal{K}$ if every $S$-ring  over $G$ is separable with respect to $\mathcal{K}$ (see~\cite{Ry2}). Denote by $\mathcal{K}_A$ and $\mathcal{K}_G$ the classes of all abelian groups and all groups isomorphic to a given group $G$ respectively. We say that a group $G$ is \emph{weakly separable} if it is separable with respect to $\mathcal{K}_G$. Clearly,  if  $G$ is abelian and separable with respect to  $\mathcal{K}_A$ then it is weakly separable. If $G$ is weakly separable then  the isomorphism problem for Cayley graphs over~$G$ can be solved efficiently by using the Weisfeiler-Leman algorithm~\cite{WeisL}. In the sense of~\cite{KPS} this means that the Weifeiler-Leman dimension of the class of Cayley graphs over~$G$ is at most~3. For more information about separable $S$-rings and groups we refer the reader to~\cite{EP2,Ry1}.  

Few results on separable groups are known. Cyclic $p$-groups are separable with respect to $\mathcal{K}_{A}$ (\cite{EP3,Ry3}). Denote the cyclic group of order~$n$ and the elementary abelian group of order $p^k$, where $p$ is a prime and $k\geq 0$, by $C_n$ and $E_{p^k}$ respectively. The groups $C_{p}\times C_{p^k}$ and $E_{p^3}$ are separable with respect to $\mathcal{K}_{A}$ for $p\in\{2,3\}$ and $k\geq 1$ (see~\cite{Ry1,Ry3}). A complete classification of abelian $p$-groups, which are separable with respect to $\mathcal{K}_{A}$, was obtained in~\cite{Ry3}. Namely, in~\cite{Ry3} it was proved that an abelian $p$-group is  separable with respect to $\mathcal{K}_{A}$ if and only if it is cyclic or isomorphic to one of the above mentioned groups. In~\cite{Ry2} it was proved that the group $E_4\times C_p$ is separable with respect to $\mathcal{K}_{A}$ for every prime $p$. 

The main results of the paper are given in the following three theorems.

\begin{theo}\label{main1}
If a cyclic group of order $n$ is weakly separable then $n$ belongs to one of the following families of integers:
$$p^k, pq^k, 2pq^k, pqr, 2pqr,$$
where $p,q,r$ are distinct primes and $k\geq 0$ is an integer. Moreover, cyclic $p$-groups are separable with respect to $\mathcal{K}_{A}$.
\end{theo}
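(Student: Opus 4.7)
The ``moreover'' clause is established in \cite{EP3, Ry3}, so I concentrate on the first assertion. My plan is to prove its contrapositive: for every $n$ whose factorization matches none of the listed shapes, I exhibit an $S$-ring $\mathcal{A}$ over $C_n$, an $S$-ring $\mathcal{A}'$ over a group isomorphic to $C_n$, and an algebraic isomorphism $\varphi\colon \mathcal{A}\to\mathcal{A}'$ that is not induced by any combinatorial isomorphism.

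The first step is a hereditary reduction: if $C_n$ is weakly separable and $n = dm$ with $\gcd(d,m) = 1$, then $C_d$ is weakly separable. The argument lifts a putative counterexample on $C_d$ to one on $C_n \cong C_d \times C_m$ by combining $\varphi$ with the identity on an auxiliary $S$-ring over $C_m$ (for instance, the discrete or the rank-two $S$-ring) via a tensor or generalized wreath product; a combinatorial isomorphism between the lifted $S$-rings can then be shown to descend to one between $\mathcal{A}$ and $\mathcal{A}'$. This restricts the proof to a short list of minimal forbidden orders such as $n = p^2 q^2$ for distinct odd primes $p,q$; $n = 4pq$ for distinct odd primes; $n = p_1 p_2 p_3 p_4$ for four distinct odd primes; and analogous configurations involving the prime $2$ that fall outside the $2pqr$ family.

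For each minimal forbidden $n$ I build an explicit non-separable pair. The main tool is the Leung--Ma description of $S$-rings over cyclic groups, which presents every such $S$-ring as a generalized wreath or tensor product of cyclotomic $S$-rings on subquotients. Using this, I choose two $S$-rings with coinciding tensors of structure constants (producing the algebraic isomorphism) but whose associated Cayley schemes are distinguished by a combinatorial invariant, such as the orbit structure of the automorphism group on ordered triples of basic sets or the schurity of a subscheme supported on a proper subgroup.

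The main obstacle lies in this last step. Matching the structure constants is routine arithmetic, but ruling out any combinatorial realization of $\varphi$ in each case requires a detailed analysis of the permutation groups of the associated Cayley schemes. The most delicate case is $n = p^2 q^2$, where the two prime-power components are individually well behaved (indeed separable by the ``moreover'' clause and \cite{Ry3}), yet their combination admits an exotic algebraic automorphism arising from incompatible multiplier actions on the two factors; showing that no combinatorial automorphism can implement this multiplier mismatch is the crux of the argument.
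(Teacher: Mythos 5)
Your reduction step is essentially the paper's Lemma~2.1 (the wreath-product lifting from \cite{Ry3}), and the arithmetic case analysis you defer to is indeed routine; the problem is that the heart of the argument is missing. You never actually produce, for any forbidden order, the pair of $S$-rings, the algebraic isomorphism, or a proof that it is not induced by a combinatorial isomorphism --- you describe a plan (Leung--Ma decompositions, two $S$-rings with equal structure constants distinguished by "a combinatorial invariant") and then concede that ruling out every combinatorial realization is an unresolved "crux". That crux is exactly what the paper supplies, and by a different and much more economical mechanism than an automorphism-group analysis: in Proposition~3.1 an explicit cyclotomic (hence schurian) $S$-ring $\mathcal{A}$ is built over a subgroup $H_1\times H_2$ (with $|H_i|\in\{p_iq_i,\,4q_i,\,8\}$), together with an explicit order-two algebraic automorphism $\varphi$, and the point is that the algebraic fusion $\mathcal{A}^{\langle\varphi\rangle}$ coincides with the non-schurian $S$-ring of Evdokimov--Kov\'acs--Ponomarenko; since by Lemma~2.3 a fusion of a schurian $S$-ring under isomorphism-induced algebraic automorphisms stays schurian, $\varphi$ cannot be induced by an isomorphism. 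Without this (or some equally concrete substitute) your proposal is a programme, not a proof.

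There is also a concrete error in your list of "minimal forbidden orders": $n=4pq$ with $p,q$ odd is \emph{not} excluded by the theorem --- the primes in the families are not assumed odd, so $4pq$ lies in the family $2pqr$ (with one of the three primes equal to $2$); the paper's own case analysis explicitly places $n=4q^l$ and $n=4qr$ into the allowed families. Attempting to show $C_{4pq}$ is not weakly separable is therefore attempting something the theorem does not assert and which the paper leaves open (no coprime decomposition $4pq=n_1n_2$ has $\Omega^{*}(n_i)\geq 2$ on both sides, so the paper's criterion does not apply). The genuinely minimal excluded configurations are the coprime products $n_1n_2$ with $\Omega^{*}(n_i)\geq 2$ for both $i$, such as $p^2q^2$, $p^2qr$, $pqrs$ with all primes odd, or $8p^2$, $8pq$ --- not $4pq$.
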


In fact, the next result is a direct corollary  of~\cite[Theorem~1.2]{Ry3}.

\begin{theo}\label{main2}
An elementary abelian non-cyclic group of order $n$ is weakly separable if and only if $n\in\{4,8,9,27\}$. Moreover, if $n\in\{4,8,9,27\}$ then the elementary abelian group of order $n$ is separable with respect to $\mathcal{K}_{A}$.
\end{theo}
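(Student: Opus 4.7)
The plan is to derive Theorem~\ref{main2} entirely from \cite[Theorem~1.2]{Ry3}, which classifies abelian $p$-groups that are separable with respect to $\mathcal{K}_A$: among non-cyclic elementary abelian $p$-groups, only $E_4$, $E_8$, $E_9$, and $E_{27}$ appear on the list. Thus the theorem amounts to saying that for elementary abelian non-cyclic groups, weak separability and separability with respect to $\mathcal{K}_A$ happen to coincide.

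The \emph{if} direction, together with the ``moreover'' clause, is immediate: for $n\in\{4,8,9,27\}$ the group $E_n$ is separable with respect to $\mathcal{K}_A$ by \cite[Theorem~1.2]{Ry3}. Since $\mathcal{K}_{E_n}\subseteq\mathcal{K}_A$, every $S$-ring over $E_n$ is in particular separable with respect to $\mathcal{K}_{E_n}$, so $E_n$ is weakly separable.

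For the \emph{only if} direction, let $E_n$ be a non-cyclic elementary abelian $p$-group with $n\notin\{4,8,9,27\}$; the remaining cases are $E_{p^k}$ with $p\geq 5$ and $k\geq 2$, and $E_{p^k}$ with $p\in\{2,3\}$ and $k\geq 4$. By \cite[Theorem~1.2]{Ry3}, in each case there is an $S$-ring $\mathcal{A}$ over $E_n$ and an algebraic isomorphism $\varphi\colon\mathcal{A}\to\mathcal{A}'$ to some $S$-ring $\mathcal{A}'$ over an abelian group of order $n$ that is not induced by a combinatorial isomorphism. To upgrade this to a failure of weak separability one needs $\mathcal{A}'$ to live over $E_n$ itself. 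The plan is to read this off the proof of \cite[Theorem~1.2]{Ry3}: in that proof the non-separable $\mathcal{A}$ is built as a suitable wreath or tensor product of smaller $S$-rings over elementary abelian subgroups, and the companion ring $\mathcal{A}'$ witnessing the non-induced algebraic isomorphism is assembled out of the same ingredients, so it also sits over an elementary abelian group of the same rank, namely $E_n$. Once this is verified, $\mathcal{A}$ itself witnesses that $E_n$ is not weakly separable.

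The main obstacle is precisely this last bookkeeping step. In general, an algebraic isomorphism between $S$-rings over different abelian groups of the same order is genuinely possible, so one cannot deduce $\mathcal{A}'\in\mathcal{K}_{E_n}$ formally from $\mathcal{A}\in\mathcal{K}_{E_n}$; one has to look inside the constructions of \cite{Ry3} and check case by case that both sides of the algebraic isomorphism are elementary abelian. Assuming this verification goes through (as indicated by the phrasing ``direct corollary'' in the statement), the two directions combine to yield Theorem~\ref{main2}.
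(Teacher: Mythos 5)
Your treatment of the sufficiency and of the ``moreover'' clause coincides with the paper's: for $n\in\{4,8,9,27\}$ separability with respect to $\mathcal{K}_A$ from \cite[Theorem~1.2]{Ry3} gives weak separability because $\mathcal{K}_{E_n}\subseteq\mathcal{K}_A$. The necessity, however, is where your proposal has a genuine gap. What \cite[Theorem~1.2]{Ry3} gives for the remaining $n$ is only non-separability with respect to $\mathcal{K}_A$, and, as you yourself note, this does not formally yield failure of weak separability: the witnessing algebraic isomorphism $\varphi\colon\mathcal{A}\to\mathcal{A}'$ could a priori land on an $S$-ring over a non-isomorphic abelian group of order $n$. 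Your plan to settle this by inspecting the constructions inside the proof of \cite[Theorem~1.2]{Ry3} is left as an unverified assumption (``assuming this verification goes through''), so the only-if direction is not actually proved.

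The paper sidesteps this bookkeeping entirely by invoking Lemma~\ref{nonsep} together with Lemma~\ref{nonsepwr}. Lemma~\ref{nonsep} produces, for every group $G$ of order at least~$4$, an $S$-ring over $G\times G$ and an algebraic \emph{automorphism} of it (a map from the $S$-ring to itself, so the target group is the same by construction) that is not induced by an isomorphism. Taking $G=C_p$ for $p\geq 5$ gives $G\times G\cong E_{p^2}\leq E_{p^k}$ for $k\geq 2$, and taking $G=E_{p^2}$ for $p\in\{2,3\}$ gives $G\times G\cong E_{p^4}\leq E_{p^k}$ for $k\geq 4$; these are exactly your remaining cases. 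Lemma~\ref{nonsepwr} then lifts the algebraic automorphism along the wreath product $\mathcal{B}\wr\mathbb{Z}(G/H)$ to an algebraic automorphism of an $S$-ring over $E_{p^k}$ itself that is still not induced by an isomorphism, so $E_{p^k}$ is not weakly separable. If you wish to keep your route you must actually carry out the case-by-case check inside \cite{Ry3}; the lemma-based argument is shorter and does not depend on those internal details.
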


\begin{theo}\label{main3}
An abelian weakly separable group, which is neither cyclic nor elementary abelian, is isomorphic to a group from one of the following eight families:

$(1)$ $C_2 \times C_{2^k}$, $C_{2p}\times C_{2^k}$, $E_4 \times C_{p^k}$, $E_4 \times C_{pq}$,

$(2)$ $C_3 \times C_{3^k}$, $C_{6}\times C_{3^k}$, $E_9\times C_{q}$, $E_9 \times C_{2q}$,

\noindent where $p$ and $q$ are distinct primes, $p\neq 2$, and $k\geq 1$ is an integer. Moreover, the groups $C_2 \times C_{2^k}$,  $C_3 \times C_{3^k}$, and $E_4 \times C_{p}$ are separable with respect to $\mathcal{K}_{A}$.
\end{theo}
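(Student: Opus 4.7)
The plan is to establish the necessary direction — that weak separability forces $G$ into one of the eight listed families — and then appeal to known results for the ``moreover'' sufficient statement.

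The first step is a reduction lemma: weak separability is inherited by direct factors of abelian groups. If $G = H \times K$ is abelian and weakly separable, then both $H$ and $K$ are weakly separable. The proof goes through tensor products of $S$-rings: an $S$-ring $\mathcal{A}$ over $H$ tensored with the trivial $S$-ring over $K$ yields an $S$-ring over $G$, and algebraic isomorphisms lift and project along this construction. An immediate consequence is that every cyclic subgroup of $G$ is weakly separable, so by Theorem~\ref{main1} its order lies in the list $\{p^k, pq^k, 2pq^k, pqr, 2pqr\}$; in particular, $G$ has at most three distinct prime divisors, and when it has three, one of them must equal~$2$. Similarly, each Sylow $p$-subgroup $G_p$ is a weakly separable abelian $p$-group, hence by Theorem~\ref{main2} together with the $p$-group classification in \cite{Ry3}, $G_p$ must be either cyclic of $p$-power order, of the form $C_p\times C_{p^k}$ with $p\in\{2,3\}$, or isomorphic to $E_{p^3}$ with $p\in\{2,3\}$.

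The next and most delicate step is to combine these Sylow constraints with the cyclic-subgroup restriction to eliminate the remaining disallowed products. Combinations involving $E_8$ or $E_{27}$ with a nontrivial coprime cyclic factor must be excluded, as must triple products such as $E_4 \times C_{pqr}$ with $p,q,r\neq 2$, and mixed products combining $C_p\times C_{p^k}$ for $p=2$ with $C_q\times C_{q^l}$ for $q=3$. I would rule out each such borderline case by constructing an explicit algebraic isomorphism between two $S$-rings over the group that is not induced by any combinatorial isomorphism; this is the technical heart of the argument and will require careful bookkeeping of wreath and tensor decompositions of $S$-rings in the presence of the extra cyclic factors. What survives the elimination is precisely the eight families in the statement, and the asymmetry between families $(1)$ and $(2)$ (e.g.\ $E_9\times C_q$ appearing but not $E_9\times C_{pq}$ for odd $p,q\neq 3$) is traced back to the different Schur-ring lattices over $E_4$ and $E_9$.

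Finally, the ``moreover'' clause is handled by citation: the separability of $C_2\times C_{2^k}$ and $C_3\times C_{3^k}$ with respect to $\mathcal{K}_A$ follows from the classification of separable abelian $p$-groups in \cite{Ry3}, and the separability of $E_4\times C_p$ is the main result of \cite{Ry2}. The chief obstacle throughout is the negative direction for the excluded mixed groups, namely supplying, case by case, a pair of $S$-rings with matching structure constants whose Cayley schemes fail to be combinatorially isomorphic.
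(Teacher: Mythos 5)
Your outline identifies the right shape of the argument (constrain Sylow subgroups, bound the number of primes, eliminate mixed borderline cases, cite known positive results), but it has a genuine gap: the step you yourself call ``the technical heart'' --- producing, for every excluded mixed group, an algebraic isomorphism not induced by a combinatorial one --- is not carried out at all, and it is precisely this step that constitutes the bulk of the paper's proof. The paper handles all such cases uniformly by Proposition~\ref{suff}: if $G=G_1\times G_2$ with $\Omega^{*}(|G_i|)\geq 2$ for $i=1,2$, then $G$ is not weakly separable. Its proof is a nontrivial construction (a cyclotomic $S$-ring over a subgroup of the form $C_{p_1q_1}\times H_2$, an explicit algebraic automorphism $\varphi$ of order~2, verification that $\varphi$ preserves the structure constants, and the observation that the algebraic fusion $\mathcal{A}^{\langle\varphi\rangle}$ equals the non-schurian $S$-ring of~\cite{EKP2}, so $\varphi$ cannot be induced by Lemma~\ref{fusion}), followed by Lemma~\ref{nonsepwr} to pass from the subgroup to $G$. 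Without this (or an equivalent case-by-case construction), the necessity direction of Theorem~\ref{main3} is simply not proved.

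There are also logical weak points in the reductions you do state. First, your ``reduction lemma'' (weak separability passes to direct factors, proved by tensoring with a trivial $S$-ring and ``lifting and projecting'') is asserted but not proved; the delicate direction is exactly that a \emph{non-induced} algebraic isomorphism over the factor yields a \emph{non-induced} one over the product, which is what \cite[Lemma~2.5]{Ry3} (Lemma~\ref{nonsepwr} here) supplies for wreath products and what the paper uses instead. Second, you pass from direct factors to ``every cyclic subgroup of $G$ is weakly separable''; subgroups of abelian groups need not be direct factors (e.g.\ $C_2\leq C_4$), so this step needs its own justification. Third, you constrain $G_p$ via ``the $p$-group classification in \cite{Ry3}'', but that classification concerns separability with respect to $\mathcal{K}_A$, which is a priori \emph{stronger} than weak separability; a $p$-group failing $\mathcal{K}_A$-separability is not automatically non--weakly separable. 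The paper avoids this by proving Lemma~\ref{sylow} directly from explicit algebraic automorphisms not induced by isomorphisms (Lemma~\ref{nonsep}, i.e.\ the Golfand--Klin construction over $H\times H$, plus the construction over $E_4\times C_4$ from \cite[pp.10--11]{Ry3}) combined with Lemma~\ref{nonsepwr}. The ``moreover'' clause by citation is fine (the paper cites \cite{Ry1} and \cite{Ry2}), but as it stands your proposal defers, rather than supplies, the essential negative constructions.
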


We do not know whether all groups from Theorem~\ref{main1}  and Theorem~\ref{main3} are weakly separable. This question seems to be quite difficult. Indeed, to prove that a given  group is weakly separable it is required to check that every $S$-ring over this group is separable whereas to prove that a given  group is not weakly separable it is sufficient to find at least one non-separable $S$-ring over this group.

Another important problem concerned with $S$-rings is the problem of determining of all Schur groups suggested in~\cite{Po}. Recall that a finite group is called a \emph{Schur} group if every $S$-ring over this group is \emph{schurian}, i.e. it arises from a suitable permutation group. All cyclic Schur groups were classified in~\cite[Theorem~1.1]{EKP1}. From this result and Theorem~\ref{main1} it follows that every weakly separable cyclic group is Schur. All elementary abelian Schur groups were classified in~\cite[Theorem~1.2]{EKP2}. This result and Theorem~\ref{main2} imply that every  weakly separable elementary abelian group is Schur and  $E_{16}$ and $E_{32}$ are the only elementary abelian Schur groups which are not weakly separable. Due to~\cite[Theorem~1.3]{EKP2}, every abelian Schur group, which is neither cyclic nor elementary abelian, belongs to one of nine explicitly given families only. The list of these families includes each of eight families from Theorem~\ref{main3} and the groups $E_{16}\times C_p$, where $p$ is a prime, which are not weakly separable.

The main tool of the proofs of Theorem~\ref{main1} and Theorem~\ref{main3} is a sufficient condition for an abelian group to be non-weakly separable (Proposition~\ref{suff}). In the proof of Proposition~\ref{suff} we construct a schurian $S$-ring $\mathcal{A}$ over an abelian group and an algebraic isomorphism $\varphi$ from $\mathcal{A}$ to itself such that the algebraic fusion of $\mathcal{A}$ with respect to $\langle \varphi \rangle$ (see Subsection~2.3 for definitions) coincides with non-schurian $S$-ring constructed in~\cite[Theorem~4.1]{EKP2}. Lemma~\ref{fusion} implies that $\varphi$ is not induced by an isomorphism and hence $\mathcal{A}$ is not separable.

To make the paper self-contained we collect the basic facts on $S$-rings in Section~2.

The author would like to thank prof. I. Ponomarenko for the fruitful discussions on the subject matters.

~\

{\bf Notation.}

The ring of rational integers is denoted by $\mathbb{Z}$.

Let  $G$ be a finite group and $X\subseteq G$. The element $\sum_{x\in X} {x}$ of the group ring $\mathbb{Z}G$ is denoted by $\underline{X}$.

The order of $g\in G$ is denoted by $|g|$.

The set $\{x^{-1}:x\in X\}$ is denoted by $X^{-1}$.

The subgroup of $G$ generated by $X$ is denoted by $\langle X\rangle$; we also set $\rad(X)=\{g\in G:\ gX=Xg=X\}$.

Given a set $X\subseteq G$ the set $\{(g,xg): x\in  X, g\in G\}$ of arcs of the Cayley graph $\cay(G,X)$ is denoted by $R(X)$.

The group of all permutations of a set $\Omega$ is denoted by $\sym(\Omega)$.

The subgroup of $\sym(G)$ induced by right multiplications of $G$ is denoted by $G_{right}$.

If a group $K$ acts on a set $\Omega$ then the set of all orbtis of $K$ on $\Omega$ is denoted by $\orb(K,\Omega)$.

If $K\leq \sym(\Omega)$ and $\alpha\in \Omega$ then the stabilizer of $\alpha$ in $K$ is denoted by $K_{\alpha}$.

If $G$ is a finite abelian group and $p$ is a prime divisor of $|G|$ then the Sylow $p$-subgroup of $G$ is denoted by $G_p$.

If $n$ is an integer then the number of prime divisors of $n$ and the total number of prime divisors of $n$ (with multiplicity) are denoted by $\omega(n)$ and $\Omega(n)$ respectively.

The cyclic group of order $n$ is denoted by  $C_n$.

The elementary abelian group of order $p^k$ is denoted by $E_{p^k}$.

\section{$S$-rings}
In what follows, we use the notation and terminology of~\cite{MP,Ry1}.

\subsection{Definitions}

Let $G$ be a finite group and $\mathbb{Z}G$  the integral group ring. The identity element of $G$ is denoted by $e$.  A subring  $\mathcal{A}\subseteq \mathbb{Z} G$ is called an \emph{$S$-ring} over $G$ if there exists a partition $\mathcal{S}=\mathcal{S}(\mathcal{A})$ of~$G$ such that:

$(1)$ $\{e\}\in\mathcal{S}$,

$(2)$  if $X\in\mathcal{S}$ then $X^{-1}\in\mathcal{S}$,

$(3)$ $\mathcal{A}=\Span_{\mathbb{Z}}\{\underline{X}:\ X\in\mathcal{S}\}$.

The elements of $\mathcal{S}$ are called the \emph{basic sets} of  $\mathcal{A}$. The number $|\mathcal{S}|$ is called the \emph{rank} of~$\mathcal{A}$ and denoted by $\rk(\mathcal{A})$. Given $X,Y,Z\in\mathcal{S}$ the number of distinct representations of $z\in Z$ in the form $z=xy$ with $x\in X$ and $y\in Y$ is denoted by $c^Z_{X,Y}$. If $X,Y\in\mathcal{S}$ then 
$$\underline{X}~\underline{Y}=\sum_{Z\in \mathcal{S}(\mathcal{A})}c^Z_{X,Y}\underline{Z}.$$ 
This means that the numbers  $c^Z_{X,Y}$ are the structure constants of $\mathcal{A}$ with respect to the basis $\{\underline{X}:\ X\in\mathcal{S}\}$. One can check that 
$$|Z|c^{Z^{-1}}_{X,Y}=|X|c^{X^{-1}}_{Y,Z}=|Y|c^{Y^{-1}}_{Z,X}~\eqno(1)$$
for all $X,Y,Z\in \mathcal{S}(\mathcal{A})$.

A set $X \subseteq G$ is called an \emph{$\mathcal{A}$-set} if $\underline{X}\in \mathcal{A}$. A subgroup $H \leq G$ is called an \emph{$\mathcal{A}$-subgroup} if $H$ is an $\mathcal{A}$-set. With each $\mathcal{A}$-set $X$ one can naturally associate two $\mathcal{A}$-subgroups, namely $\langle X \rangle$ and $\rad(X)$.

A section $U/L$ of $G$ is said to be an \emph{$\mathcal{A}$-section} if $U$ and $L$ are $\mathcal{A}$-subgroups. If $S=U/L$ is an $\mathcal{A}$-section then the module
$$\mathcal{A}_S=Span_{\mathbb{Z}}\left\{\underline{X}^{\pi}:~X\in\mathcal{S}(\mathcal{A}),~X\subseteq U\right\},$$
where $\pi:U\rightarrow U/L$ is the canonical epimorphism, is an $S$-ring over $S$.

If $K \leq \aut(G)$ then the set  $\orb(K,G)$ forms a partition of  $G$ that defines an  $S$-ring $\mathcal{A}$ over $G$.  In this case  $\mathcal{A}$ is called \emph{cyclotomic} and denoted by $\cyc(K,G)$.

Let $S=U/L$ be an $\mathcal{A}$-section. The $S$-ring~$\mathcal{A}$ is called the \emph{$S$-wreath product}  if $L\trianglelefteq G$ and $L\leq\rad(X)$ for all basic sets $X$ outside~$U$. The $S$-wreath product is called \emph{nontrivial} or \emph{proper}  if $e\neq L$ and $U\neq G$.

\subsection{Isomorphisms and schurity}

Let $\mathcal{A}$ and $\mathcal{A}^{'}$ be $S$-rings over groups $G$ and $G^{'}$ respectively. A bijection $f:G\rightarrow G^{'}$ is defined to be \emph{a (combinatorial) isomorphism } from $\mathcal{A}$ over to $\mathcal{A}^{'}$  if 
$$\{R(X)^f: X\in \mathcal{S}(\mathcal{A})\}=\{R(X^{'}): X^{'}\in \mathcal{S}(\mathcal{A}^{'})\},$$ where $R(X)^f=\{(g^f,~h^f):~(g,~h)\in R(X)\}$. The group $\iso(\mathcal{A})$ of all isomorphisms from $\mathcal{A}$ onto itself has a normal subgroup
$$\aut(\mathcal{A})=\{f\in \iso(\mathcal{A}): R(X)^f=R(X)~\text{for every}~X\in \mathcal{S}(\mathcal{A})\}.$$
This subgroup is called the \emph{automorphism group} of $\mathcal{A}$. Note that $\aut(\mathcal{A})\geq G_{right}$. 

Let $K$ be a subgroup of $\sym(G)$ containing  $G_{right}$. In~\cite{Schur} Schur proved that the $\mathbb{Z}$-submodule
$$V(K,G)=\Span_{\mathbb{Z}}\{\underline{X}:~X\in \orb(K_e,~G)\},$$
is an $S$-ring over $G$. An $S$-ring $\mathcal{A}$ over  $G$ is called \emph{schurian} if $\mathcal{A}=V(K,G)$ for some $K$ such that $G_{right}\leq K \leq \sym(G)$. In fact, there is a lot of non-shurian $S$-rings. An infinite family of them can be found in~\cite{Wi}. Every cyclotomic $S$-ring is schurian. Indeed, if $\mathcal{A}=\cyc(K,G)$ for some $K\leq \aut(G)$ then $\mathcal{A}=V(G_{right}\rtimes K,G)$.

\subsection{Algebraic isomorphisms and separability}

 A bijection $\varphi:\mathcal{S}(\mathcal{A})\rightarrow\mathcal{S}(\mathcal{A}^{'})$ is defined to be an \emph{algebraic isomorphism} from $\mathcal{A}$  to $\mathcal{A}^{'}$ if 
$$c_{X,Y}^Z=c_{X^{\varphi},Y^{\varphi}}^{Z^{\varphi}}$$ 
for all $X,Y,Z\in \mathcal{S}(\mathcal{A})$. The mapping $\underline{X}\rightarrow \underline{X}^{\varphi}$ is extended by linearity to the ring isomorphism of $\mathcal{A}$  and $\mathcal{A}^{'}$.   An algebraic isomorphism from $\mathcal{A}$ to itself is called an \emph{algebraic automorphism} of $\mathcal{A}$. The group of all algebraic automorphisms of $\mathcal{A}$ is denoted by $\aut_{\alg}(\mathcal{A})$. 

Every isomorphism $f$ of $S$-rings  preserves  the structure constants  and hence $f$ induces the algebraic isomorphism $\varphi_f$. However, not every algebraic isomorphism is induced by a combinatorial one, see, e.g.,~\cite{EP1}. Let $\mathcal{K}$ be a class of groups. An $S$-ring $\mathcal{A}$ is defined to be \emph{separable} with respect to $\mathcal{K}$ if every  algebraic isomorphism from $\mathcal{A}$ to an $S$-ring over a group from $\mathcal{K}$ is induced by a combinatorial isomorphism. For every group $G$ the $S$-ring of rank~2 over $G$ and $\mathbb{Z}G$ are separable with respect to the class of all finite groups.

A finite group $G$ is said to be \emph{separable} with respect to $\mathcal{K}$ if every $S$-ring  over $G$ is separable with respect to $\mathcal{K}$. 

We say that $G$ is \emph{weakly separable} if it is separable with respect to the class $\mathcal{K}_G$ of groups isomorphic to $G$.

\begin{lemm}\cite[Lemma~2.5]{Ry3}\label{nonsepwr}
Let $H$ be a normal subgroup of a group $G$ and $\mathcal{B}$  an $S$-ring over $H$. Suppose that $\varphi \in \aut_{\alg}(\mathcal{B})$ and $\varphi$ is not induced by an isomorphism. Then there exists $\psi\in \aut_{\alg}(\mathcal{A})$, where $\mathcal{A}=\mathcal{B}\wr \mathbb{Z}(G/H)$, such that $\psi^H=\varphi$ and  $\psi$ is not induced by an isomorphism.
\end{lemm}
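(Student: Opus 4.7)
The plan is to construct $\psi$ by extending $\varphi$ so that it fixes every basic set of $\mathcal{A}$ lying outside $H$, and then verify both the algebraic-automorphism property and the non-inducibility directly from the wreath-product structure.

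First I would record the shape of the basic sets of $\mathcal{A} = \mathcal{B}\wr \mathbb{Z}(G/H)$: they are precisely the basic sets of $\mathcal{B}$ together with the non-trivial cosets $gH$ for $g \in G\setminus H$. With this in hand, I define
$$\psi(X) = \varphi(X) \text{ for } X \in \mathcal{S}(\mathcal{B}), \qquad \psi(gH) = gH \text{ for } g \in G\setminus H,$$
so that $\psi^H = \varphi$ by construction.

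Next I would verify that $\psi \in \aut_{\alg}(\mathcal{A})$ by a case analysis on which of $X,Y,Z \in \mathcal{S}(\mathcal{A})$ lie in $\mathcal{S}(\mathcal{B})$ and which are cosets of $H$. When all three lie in $\mathcal{S}(\mathcal{B})$, the constants coincide with those of $\mathcal{B}$ and are preserved because $\varphi\in\aut_{\alg}(\mathcal{B})$. When one factor is a basic set $X\subseteq H$ and the other a coset $gH$, normality of $H$ yields $\underline{X}\,\underline{gH} = |X|\,\underline{gH}$, and $\varphi$ preserves cardinality. When both factors are cosets $gH,g'H$, the product equals $|H|\,\underline{gg'H}$ if $gg' \notin H$, and $|H|\,\underline{H} = |H|\sum_{X\in\mathcal{S}(\mathcal{B})}\underline{X}$ if $gg'\in H$; both cases are evidently respected by $\psi$.

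For the non-inducibility, I would argue by contradiction: suppose $\psi = \varphi_f$ for some $f \in \iso(\mathcal{A})$. Right translation by $f(e)^{-1}$ belongs to $G_{right}\leq \aut(\mathcal{A})$ and induces the identity algebraic automorphism, so one may replace $f$ by its composition with this translation and assume $f(e) = e$. Since each coset $gH$ is a basic set fixed by $\psi$, the relation $R(gH)^f = R(gH)$ combined with $f(e)=e$ forces $f(gH) = gH$ for every $g\in G$; in particular $f$ restricts to a bijection $f_H\colon H\to H$. Intersecting the identities $R(X)^f = R(\varphi(X))$ for $X\in\mathcal{S}(\mathcal{B})$ with $H\times H$ then shows that $f_H$ is a combinatorial isomorphism of $\mathcal{B}$ inducing $\varphi$, contradicting the hypothesis on $\varphi$.

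The bulk of the work is the case analysis in the second step, which is routine but relies essentially on $H\trianglelefteq G$; the conceptual heart of the argument is the normalization $f(e)=e$ in the third step, which makes $H$ invariant and allows the restriction to $H$ to inherit the algebraic automorphism $\varphi$.
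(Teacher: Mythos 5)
Your proof is correct, and it is essentially the standard argument: the paper itself does not prove this lemma but cites it from~\cite[Lemma~2.5]{Ry3}, where the extension of $\varphi$ by the identity on the cosets of $H$, the verification via the wreath-product structure constants, and the normalization $f(e)=e$ followed by restriction to $H$ are exactly the intended route. The only cosmetic point is that $H$ itself is not a single basic set of $\mathcal{A}$ but a union of basic sets of $\mathcal{B}$, so $f(H)=H$ should be deduced from $f(X)\subseteq X^{\varphi}\subseteq H$ for $X\in\mathcal{S}(\mathcal{B})$ (or from $f$ fixing all cosets $gH$ with $g\notin H$ and bijectivity), which is what your argument implicitly does.
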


\begin{lemm}\label{nonsep}
For every group $G$ of order at least~4 there exist an $S$-ring $\mathcal{A}$ over $G\times G$ and $\varphi\in \aut_{\alg}(\mathcal{A})$ such that $\varphi$ is not induced by an isomorphism.
\end{lemm}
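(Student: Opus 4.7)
The plan is to construct a schurian $S$-ring $\cA$ over $G\times G$ together with an algebraic automorphism $\varphi\in\aut_{\alg}(\cA)$ that is not induced by any combinatorial isomorphism. A natural strategy exploits the three subgroups isomorphic to $G$ inside $G\times G$: the factor subgroups $G_1 = G\times\{e\}$ and $G_2 = \{e\}\times G$ together with the diagonal $D = \{(g,g):g\in G\}$. These pairwise intersect in the identity, and since $|G|\geq 4$ the complement $R = (G\times G)\setminus(G_1\cup G_2\cup D)$ has size $|G|^2-3|G|+2 \geq 6$, which gives enough room to build a rich basic-set structure outside the three subgroups.

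I would take $\cA$ to be defined by a partition containing $\{(e,e)\}$, the three punctured subgroups $G_i\setminus\{e\}$ and $D\setminus\{e\}$, and a further refinement of $R$ into basic sets. The refinement must be chosen so that $\cA$ is schurian, for instance as a cyclotomic $S$-ring under the diagonal action of a suitable subgroup of $\aut(G)$, while making the three sets $G_1\setminus\{e\}$, $G_2\setminus\{e\}$, $D\setminus\{e\}$ algebraically interchangeable: the pairwise products $\underline{G_i\setminus\{e\}}\cdot\underline{G_j\setminus\{e\}}$ for distinct $i,j$ should yield the same expression, independent of the pair $\{i,j\}$. With this arranged, one defines $\varphi$ as the cyclic permutation (or a transposition) of these three basic sets, fixing all the remaining basic sets, and checks that it preserves every structure constant $c^Z_{X,Y}$.

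The main obstacle is to show $\varphi$ is not induced by a combinatorial isomorphism $f$ of $\cA$. Such an $f$ would be a bijection of $G\times G$ permuting the three subgroups $G_1, G_2, D$ in the prescribed way while preserving the arc sets $R(X)$ of every basic set, and ruling it out uniformly for every $G$ with $|G|\geq 4$ is the essential difficulty. The refinement of $R$ must therefore be chosen so as to produce combinatorial data (for example, the distribution of $c^{R_k}_{G_i\setminus\{e\},G_j\setminus\{e\}}$ for the refined pieces $R_k$ of $R$) that \emph{distinguishes} the three subgroups in a way inconsistent with the symmetry required by $\varphi$, even though the structure constants among $G_i\setminus\{e\}$ and $D\setminus\{e\}$ remain symmetric. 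For small $G$ in which a direct construction is intricate, one can instead invoke Lemma~\ref{nonsepwr} and lift a known non-separable $S$-ring from a proper normal subgroup of $G\times G$ via a wreath product.
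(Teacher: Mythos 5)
Your proposal is a plan rather than a proof, and the two steps that carry all the weight are exactly the ones you leave open. First, you never specify the refinement of $R=(G\times G)\setminus(G_1\cup G_2\cup D)$, so there is no actual $S$-ring $\cA$, no verification that the proposed permutation of $G_1\setminus\{e\}$, $G_2\setminus\{e\}$, $D\setminus\{e\}$ preserves all structure constants, and no argument that the result is schurian. Second, and more seriously, the non-realizability of $\varphi$ by a combinatorial isomorphism --- which you yourself call ``the essential difficulty,'' and which must be established uniformly for every $G$ with $|G|\geq 4$ --- is not addressed at all. Note also an internal tension in your plan: you want combinatorial data attached to the refined pieces $R_k$ that ``distinguishes'' the three subgroups, but any such data recorded in the structure constants $c^{R_k}_{X,Y}$ would equally break the algebraic symmetry and prevent $\varphi$ from being an algebraic automorphism in the first place. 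The whole difficulty is to find a partition in which the relevant basic sets are indistinguishable at the level of structure constants yet distinguishable by some invariant not captured by them; gesturing at this does not produce it. The fallback via Lemma~\ref{nonsepwr} is also circular as stated: to lift a non-separability witness from a proper subgroup of $G\times G$ you first need a concrete $S$-ring over that subgroup with an algebraic automorphism not induced by an isomorphism, which is precisely the kind of object you have not exhibited (and you cite none).

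For comparison, the paper disposes of this lemma by citation: it takes the Cayley scheme over $G\times G$ constructed by Golfand and Klin (the amorphic cellular rings of~\cite{Klin}, built from essentially the row/column/letter structure you have in mind) and invokes the proof in~\cite[pp.~90--92]{CP} that this scheme admits an algebraic isomorphism to itself not induced by any isomorphism; the corresponding $S$-ring over $G\times G$ then witnesses the lemma. So your intuition about which subgroups of $G\times G$ to exploit matches the known construction, but to make your argument a proof you would either have to reproduce the Golfand--Klin scheme and the non-realizability argument in full, or cite these results as the paper does.
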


\begin{proof}
Let $G$ be a group of order at least~4. In~\cite[pp.90-92]{CP} it was proved that for the Cayley scheme $\mathcal{C}$ over $G\times G$ constructed in~\cite[Theorem~3.3]{Klin} there exists an algebraic isomorphism from $\mathcal{C}$ to itself which is not induced by an isomorphism. This implies that for the $S$-ring $\mathcal{A}$ over $G\times G$ corresponding to $\mathcal{C}$ there exists an algebraic isomorphism from $\mathcal{A}$ to itself which is not induced by an isomorphism and the lemma is proved.
\end{proof}

\subsection{Algebraic fusions}

Let $\mathcal{A}$ be an $S$-ring over $G$ and $\Phi\leq \aut_{\alg}(\mathcal{A})$. Given $X\in \mathcal{S}(\mathcal{A})$ put $X^{\Phi}=\bigcup \limits_{\varphi\in \Phi} X^{\varphi}$. The partition
$$\{X^{\Phi}: X\in \mathcal{S}(\mathcal{A})\}$$
defines an $S$-ring over $G$ called the \emph{algebraic fusion} of $\mathcal{A}$ with respect to $\Phi$ and denoted by $\mathcal{A}^{\Phi}$.

\begin{lemm}\label{fusion}
Let $\mathcal{A}$ be a schurian $S$-ring over $G$ and $K\leq \iso(\mathcal{A})$. Then $\mathcal{A}^{\Phi}$, where $\Phi=\{\varphi_f: f\in K\}$, is also schurian.
\end{lemm}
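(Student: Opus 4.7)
The plan is to realize $\mathcal{A}^{\Phi}$ as $V(L,G)$ for a concrete permutation group $L \leq \sym(G)$ containing $G_{right}$, which instantly gives schurity.

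Since $\mathcal{A}$ is schurian, I would take $M = \aut(\mathcal{A})$. Then $G_{right}\leq M$, we have $\mathcal{A} = V(M,G)$, and the $2$-orbits of $M$ on $G\times G$ are precisely the sets $R(X)$ for $X\in \mathcal{S}(\mathcal{A})$. I would then define
\[
L \;=\; \langle M,\,K\rangle \;\leq\; \sym(G),
\]
which automatically contains $G_{right}$ via $M$.

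The core step is to identify the $2$-orbits of $L$. By the very definition of the algebraic isomorphism $\varphi_f$ attached to $f\in K\leq \iso(\mathcal{A})$ in Subsection~2.3, one has $f(R(X)) = R(X^{\varphi_f})$ for every $X\in \mathcal{S}(\mathcal{A})$. Consequently $K$ permutes the $2$-orbits of $M$, and the induced action on $\mathcal{S}(\mathcal{A})$ is exactly the action of the group $\Phi$. A routine orbit-closure argument then identifies the $L$-orbital through a point $(g,h)\in R(X)$: since $M$ already stabilizes each of its own $2$-orbits setwise while $K$ glues them together into $\Phi$-orbits, the orbital is
\[
\bigcup_{f\in K} f(R(X)) \;=\; \bigcup_{\varphi\in \Phi} R(X^{\varphi}) \;=\; R(X^{\Phi}).
\]
Therefore the $2$-orbits of $L$ are exactly the sets $R(Y)$ with $Y$ running over the basic sets of $\mathcal{A}^{\Phi}$, giving $V(L,G)=\mathcal{A}^{\Phi}$ and the desired schurity.

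I do not expect any genuine obstacle: the whole argument is a formal manipulation with $2$-orbits and the generators of $L$. The only identity that must be checked carefully is $f(R(X))=R(X^{\varphi_f})$, but this is simply the defining property of $\varphi_f$. The role of the schurity hypothesis on $\mathcal{A}$ is precisely to supply a permutation group (namely $\aut(\mathcal{A})$) whose $2$-orbits already encode $\mathcal{A}$, so that adjoining $K$ produces a permutation group whose $2$-orbits encode $\mathcal{A}^{\Phi}$.
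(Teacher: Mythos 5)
Your argument is correct. Note, though, that the paper itself does not prove the lemma directly: it simply cites \cite[Proposition~2.3.28]{CP} (see also \cite[Lemma~2.1]{Ry3}), so what you have written is in effect a self-contained proof of the result behind that citation. Your route --- take $M=\aut(\mathcal{A})$, use schurity to identify the $2$-orbits of $M$ with the relations $R(X)$, $X\in\mathcal{S}(\mathcal{A})$, set $L=\langle M,K\rangle$ and compute its $2$-orbits --- is the standard way to prove this, and it buys the reader an explicit permutation group realizing $\mathcal{A}^{\Phi}$, namely $V(L,G)=\mathcal{A}^{\Phi}$. Two small points deserve to be made explicit. First, the ``orbit-closure'' step uses that $\Phi$ is a \emph{group}: this holds because $f\mapsto\varphi_f$ is a homomorphism from $\iso(\mathcal{A})$ to $\aut_{\alg}(\mathcal{A})$ (indeed $R(X)^{fg}=R(X^{\varphi_f\varphi_g})$), so $\Phi$ is the image of the group $K$; without this, $R(X^{\Phi})$ need not be $L$-invariant. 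Second, schurity of $\mathcal{A}$ is used not only to say that the $2$-orbits of $M$ are contained in the sets $R(X)$ (which is true for any $S$-ring) but to get that $M$ is \emph{transitive} on each $R(X)$; this is what makes the $L$-orbit of a point of $R(X)$ equal to all of $R(X^{\Phi})=\bigcup_{\varphi\in\Phi}R(X^{\varphi})$, and hence the orbits of $L_e$ equal to the sets $X^{\Phi}$. With these two remarks spelled out, your proof is complete and can replace the citation.
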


\begin{proof}
Follows from~\cite[Proposition~2.3.28]{CP}. See also~\cite[Lemma~2.1]{Ry3}.
\end{proof}

\section{A sufficient condition of non-separability}

Following~\cite{EKP2}, set $\Omega^{*}(n)=\Omega(n)$ whenever $n$ is odd and $\Omega^{*}(n)=\Omega(n/2)$ whenever $n$ is even.

\begin{prop}\label{suff}
Let $G_i$ be an abelian group and $\Omega^{*}(|G_i|)\geq 2$, $i=1,2$. Then $G=G_1\times G_2$ is not weakly separable.
\end{prop}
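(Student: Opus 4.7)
The proof follows the strategy announced in the introduction. Under the hypothesis $\Omega^{*}(|G_i|)\ge 2$ for $i=1,2$, \cite[Theorem~4.1]{EKP2} supplies an explicit \emph{non-schurian} $S$-ring $\mathcal{B}$ over $G=G_1\times G_2$. My plan is to exhibit a schurian $S$-ring $\mathcal{A}$ over $G$ together with an algebraic automorphism $\varphi\in\aut_{\alg}(\mathcal{A})$ satisfying
$$\mathcal{A}^{\langle\varphi\rangle}=\mathcal{B}.$$
Once such $\mathcal{A}$ and $\varphi$ are produced, Lemma~\ref{fusion} applied contrapositively forces $\varphi$ not to be induced by any $f\in\iso(\mathcal{A})$: if it were, then every power of $\varphi$ would come from an isomorphism and $\mathcal{A}^{\langle\varphi\rangle}=\mathcal{B}$ would be schurian, contradicting the choice of $\mathcal{B}$. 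Hence $\mathcal{A}$ is not separable with respect to $\mathcal{K}_G$, and therefore $G$ is not weakly separable.

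To build $\mathcal{A}$ and $\varphi$ I would refine $\mathcal{B}$ into its schurian envelope. The non-schurian ring of~\cite[Theorem~4.1]{EKP2} is designed so that at least one of its basic sets is a union of several $\aut(\mathcal{B})_{e}$-orbits whose partial intersection numbers coincide; it is exactly this coincidence that permits fusing the orbits while preserving structure constants. I would take $\mathcal{A}$ to be a schurian refinement of $\mathcal{B}$ in which each such composite basic set is split into its $\aut(\mathcal{B})_{e}$-orbits, realised naturally as a cyclotomic $S$-ring $\cyc(K,G)$ for a suitable $K\le\aut(G)$ compatible with the wreath-type decomposition of $\mathcal{B}$ along the factors $G_1$ and $G_2$. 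Schurity of $\mathcal{A}$ is then automatic. The algebraic automorphism $\varphi$ is the permutation of $\mathcal{S}(\mathcal{A})$ that swaps, within each composite basic set of $\mathcal{B}$, the refined pieces of $\mathcal{A}$ and fixes everything else; the identities~(1) together with the intersection-number coincidences underlying the construction of $\mathcal{B}$ show that $\varphi\in\aut_{\alg}(\mathcal{A})$.

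The chief technical obstacle is to make the choice of $K$ and $\varphi$ uniform across the whole family of pairs $(G_1,G_2)$ allowed by $\Omega^{*}(|G_i|)\ge 2$, and to verify the three conditions (a)~every basic set of $\mathcal{B}$ is a union of $K$-orbits, (b)~the prescribed bijection $\varphi$ of $\mathcal{S}(\mathcal{A})$ preserves structure constants, and (c)~the fusion $\mathcal{A}^{\langle\varphi\rangle}$ equals exactly $\mathcal{B}$. Condition~(b) is the most delicate; the calculation essentially transports the intersection-number equalities from the construction of $\mathcal{B}$ in~\cite{EKP2} back to $\mathcal{A}$ via the relations~(1) applied to basic sets lying in the distinguished wreath-sections of $G$. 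Once the construction is in place, the argument concludes by Lemma~\ref{fusion} as described above.
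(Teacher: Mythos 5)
Your outline reproduces the strategy announced in the paper's introduction, but it stops exactly where the actual proof begins: everything that makes the proposition true is deferred to the ``chief technical obstacle'' and never carried out. The heart of the matter is to exhibit, concretely, a schurian $S$-ring $\mathcal{A}$ and a $\varphi\in\aut_{\alg}(\mathcal{A})$ whose fusion is a non-schurian ring; the existence of such a pair is not a formal consequence of the existence of the non-schurian ring $\mathcal{B}$ of~\cite{EKP2}. In particular, your proposed recipe -- take for $\mathcal{A}$ the refinement of $\mathcal{B}$ into $\aut(\mathcal{B})_e$-orbits and let $\varphi$ swap the pieces of each split basic set -- has no a priori reason to work: a non-schurian ring need not be an \emph{algebraic} fusion of its schurian refinement (the swap map need not preserve structure constants, and the refinement need not be cyclotomic or even split each basic set into exactly two pieces). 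Verifying this is precisely the content of the proof, and it is done in the paper by an explicit construction: one first passes to subgroups $H_i\leq G_i$ with $|H_i|\in\{p_iq_i,\,4q_i,\,8\}$, builds $\mathcal{A}=\cyc(K,H_1\times H_2)$ from explicitly listed commuting automorphisms $\sigma_1,\ldots,\sigma_4$, writes down all basic sets $X_{ij},Y_{ij},Z_{ij},T_{ij}$, defines $\varphi$ as the swap $X_{ij}\mapsto X_{(|A_1|-i)j}$, and checks the structure-constant identities case by case using~(1), (2), (3), before identifying $\mathcal{A}^{\langle\varphi\rangle}$ with the non-schurian ring of~\cite{EKP2} and lifting to $G$ via Lemma~\ref{nonsepwr}. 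None of these steps is present, or even sketched in verifiable form, in your proposal.

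Two further points would need attention even if you filled in the construction. First, your plan is stated over the whole group $G$, whereas the tractable construction lives over a small subgroup $H_1\times H_2$; the passage from the subgroup to $G$ requires Lemma~\ref{nonsepwr}, which you do not invoke. Second, your uniformity claim fails in the case where both $H_1$ and $H_2$ can only be taken of order divisible by $4$ (e.g.\ $G_1=G_2=C_8$): there the paper does not use the fusion-of-an-EKP2-ring mechanism at all, but instead quotes Lemma~\ref{nonsep} (the Golfand--Klin amorphic schemes over $U\times U$) together with the $E_4\times C_4$ example of~\cite{Ry3}, again combined with Lemma~\ref{nonsepwr}. The part of your argument that is solid is the final logical step: if $\varphi=\varphi_f$ for some $f\in\iso(\mathcal{A})$, then $\mathcal{A}^{\langle\varphi\rangle}$ would be schurian by Lemma~\ref{fusion}, a contradiction. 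But as written the proposal is a plan for a proof, not a proof.
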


\begin{proof}
Since $G_i$ is abelian and $\Omega(|G_i|)^{*}\geq 2$, there exists $H_i\leq G_i$ such that 
$$|H_i|=p_iq_i~\text{\quad or \quad}~|H_i|=4q_i~\text{\quad or \quad}~|H_i|=8,$$ 
where $p_i,q_i$ are odd primes, $i\in\{1,2\}$.  

If $|H_1|$ and $|H_2|$ are divisible by~4 then $H_1\times H_2$ contains a subgroup $U$ isomorphic to one of the following groups: 
$$C_4\times C_4,~E_4\times C_4,~E_4\times E_4.$$
There exists an $S$-ring $\mathcal{A}$ over $U$ and an algebraic isomorphism from $\mathcal{A}$ to itself that is not induced by an isomorphism. Indeed, for $U\cong C_4\times C_4$ and for $U\cong E_4\times E_4$ this follows from Lemma~\ref{nonsep} and for $U\cong E_4\times C_4$ this follows from~\cite[pp.10-11]{Ry3}. So $G=G_1\times G_2$ is not weakly separable by Lemma~\ref{nonsepwr}. Further we may assume that $|H_1|=p_1q_1$, where $p_1,q_1$ are odd primes. 

In view of Lemma~\ref{nonsepwr}, to prove the proposition it is sufficient to construct an $S$-ring $\mathcal{A}$ over $H_1\times H_2$ and $\varphi\in \aut_{\alg}(\mathcal{A})$ which is not induced by an isomorphism. Denote the subgroup of $H_1$ of order $p_1$ and its generator by $A_1$ and  $a_1$ respectively. If $|H_2|$ is divisible by~4 then denote the subgroup of $H_2$ of order~4 by $A_2$; otherwise denote the subgroup of $H_2$ of order $p_2$ by $A_2$. If $A_2$ is cyclic then denote its generator by $a_2$; if $A_2\cong E_4$ then denote its generators by $a_{21}$ and $a_{22}$. Put $L=A_1\times A_2$. Let $b_1\in H_1\setminus A_1$ and $b_2\in H_2\setminus A_2$.  Clearly, $\pi_1(b_1)$ and $\pi_2(b_2)$, where $\pi_1:H_1\rightarrow H_1/A_1$ and  $\pi_2:H_2\rightarrow H_2/A_2$ are the canonical epimorphisms, generate $H_1/A_1$ and $H_2/A_2$ respectively. If $A_2$ is cyclic then define $\sigma_1,\sigma_2,\sigma_3\in \aut(H_1\times H_2)$ in the following way:
$$\sigma_1: (a_1,a_2,b_1,b_2)\rightarrow (a_1^{-1},a_2^{-1},b_1^{-1},b_2^{-1}),$$
$$\sigma_2: (a_1,a_2,b_1,b_2)\rightarrow (a_1,a_2,b_1a_1,b_2),$$
$$\sigma_3: (a_1,a_2,b_1,b_2)\rightarrow (a_1,a_2,b_1,b_2a_2).$$
If $A_2\cong E_4$ then  define $\sigma_1,\sigma_2,\sigma_3,\sigma_4\in \aut(H_1\times H_2)$ in the following way:
$$\sigma_1: (a_1,a_{21},a_{22},b_1,b_2)\rightarrow (a_1^{-1},a_{22},a_{21},b_1^{-1},b_2^{-1}),$$
$$\sigma_2: (a_1,a_{21},a_{22},b_1,b_2)\rightarrow (a_1,a_{21},a_{22},b_1a_1,b_2),$$
$$\sigma_3: (a_1,a_{21},a_{22},b_1,b_2)\rightarrow (a_1,a_{21},a_{22},b_1,b_2a_{21}),$$
$$\sigma_4: (a_1,a_{21},a_{22},b_1,b_2)\rightarrow (a_1,a_{21},a_{22},b_1,b_2a_{22}).$$
The straightforward check shows that in both cases all $\sigma_i$ pairwise commute. Put 
$$K= 
 \begin{cases}
   \langle \sigma_1 \rangle \times \langle \sigma_2 \rangle \times \langle \sigma_3 \rangle~\text{if}~A_2~\text{is cyclic}\\
  \langle \sigma_1 \rangle \times \langle \sigma_2 \rangle \times \langle \sigma_3 \rangle\times \langle \sigma_4 \rangle~\text{if}~A_2\cong E_4
 \end{cases}.$$
Let $\mathcal{A}=\cyc(K,H_1\times H_2)$. The definition of $K$ yields that the basic sets of $\mathcal{A}$ are the following:
$$X_{ij}=\{a_1^ia_2^j,a_1^{-i}a_2^{-j}\},~i=0,\ldots, |A_1|-1,~j=0,\ldots,|A_2|-1,$$
$$Y_{ij}=b_1^ia_2^jA_1 \cup b_1^{-i}a_2^{-j}A_1,~i=1,\ldots,|H_1/A_1|,~j=0,\ldots,|A_2|-1,$$
$$Z_{ij}=b_2^ia_1^jA_2 \cup b_2^{-i}a_1^{-j}A_2,~i=1,\ldots,|H_2/A_2|,~j=0,\ldots,|A_1|-1,$$
$$T_{ij}=b_1^ib_2^j(A_1\times A_2) \cup b_1^{-i}b_2^{-j}(A_1\times A_2),~i=1,\ldots,|H_1/A_1|,~j=1,\ldots,|H_2/A_2|,$$
if $A_2$ is cyclic and
$$X_{ij}=\{a_1^ia_{21}^j,a_1^{-i}a_{22}^{j}\},~X_i=a_{21}a_{22}\{a_1^i,a_1^{-i}\},~i=0,\ldots, |A_1|-1,~j=0,1,$$
$$Y_{ij}=b_1^ia_{21}^jA_1 \cup b_1^{-i}a_{22}^{j}A_1,~Y_{i}=a_{21}a_{22}(b_1^iA_1 \cup b_1^{-i}A_1)~i=1,\ldots,|H_1/A_1|,~j=0,1,$$
$$Z_{ij}=b_2^ia_1^jA_2 \cup b_2^{-i}a_1^{-j}A_2,~i=1,\ldots,|H_2/A_2|,~j=0,\ldots,|A_1|-1,$$
$$T_{ij}=b_1^ib_2^j(A_1\times A_2) \cup b_1^{-i}b_2^{-j}(A_1\times A_2),~i=1,\ldots,|H_1/A_1|,~j=1,\ldots,|H_2/A_2|,$$
if  $A_2\cong E_4$. Note that $\mathcal{A}$ is the $(H_1\times A_2)/A_2$-wreath product and $$X=X^{-1}~\eqno(2)$$ 
for every $X\in\mathcal{S}(\mathcal{A})$.

Let $\mathcal{W}=\{X_{ij}:~i\geq 1,~j\geq 1\}$. Define a permutation $\varphi$ on the set $\mathcal{S}(\mathcal{A})$ as follows:
$$X^{\varphi}= 
 \begin{cases}
   X_{(|A_1|-i)j}~\text{if}~X=X_{ij}~\text{for some}~i\geq 1~\text{and}~j\geq 1\\
   X~\text{if}~X\in \mathcal{S}(\mathcal{A})\setminus \mathcal{W}
 \end{cases}.$$
Note that $|\varphi|=2$ and 
$$|X^{\varphi}|=|X|=2~\eqno(3)$$
for every $X\in\mathcal{S}(\mathcal{A})$. Let us prove that $\varphi\in \aut_{\alg}(\mathcal{A})$. To do this we need to check that 
$$c^{Z^{\varphi}}_{X^{\varphi},Y^{\varphi}}=c^Z_{X,Y}$$
for all $X,Y,Z\in \mathcal{S}(\mathcal{A})$. If $X,Y,Z\notin \mathcal{W}$ then $X^{\varphi}=X$, $Y^{\varphi}=Y$, and $Z^{\varphi}=Z$. So $c^{Z^{\varphi}}_{X^{\varphi},Y^{\varphi}}=c^Z_{X,Y}$. Suppose that one of the sets among $X,Y,Z$ lies in $\mathcal{W}$. In view of~(1), (2), and (3) we may assume that $Z\in \mathcal{W}$. The straightforward computation shows that the elements $\underline{X_{ij}}$ and $\underline{X_{ij}}^{\varphi}=\underline{X_{(|A_1|-i)j}}$ enter the product $\underline{X}\underline{Y}$ with the same coefficients for all $i,j\geq 1$ and all $X,Y\in \mathcal{S}(\mathcal{A})\setminus \mathcal{W}$.  Therefore $c^{Z^{\varphi}}_{X^{\varphi},Y^{\varphi}}=c^{Z^{\varphi}}_{X,Y}=c^Z_{X,Y}$.

Suppose that at least two of the sets among $X,Y,Z$ lie in $\mathcal{W}$. Due to~(1), (2), and (3) we may assume that $X,Y\in \mathcal{W}$. A direct check shows that 
$$\underline{X}\underline{Y}= 
 \begin{cases}
   2e+\underline{T}~\text{if}~X=Y\\
   \underline{T_1}+\underline{T_2}~\text{if}~X\neq Y
 \end{cases}~\text{and \quad}~
\underline{X}^{\varphi}\underline{Y}^{\varphi}= 
 \begin{cases}
   2e+\underline{T}^{\varphi}~\text{if}~X=Y\\
   \underline{T_1}^{\varphi}+\underline{T_2}^{\varphi}~\text{if}~X\neq Y
 \end{cases}$$
for some $T,T_1,T_2\in \mathcal{S}(\mathcal{A})$ if $A_2$ is cyclic and
$$\underline{X}\underline{Y}=\underline{X}^{\varphi}\underline{Y}^{\varphi}$$
if $A_2\cong E_4$. Therefore $c^{Z^{\varphi}}_{X^{\varphi},Y^{\varphi}}=c^Z_{X,Y}$. Thus, $\varphi$ preserves the structure constants and hence $\varphi\in \aut_{\alg}(\mathcal{A})$.

Suppose that $\varphi$ is induced by an isomorphism. Since $\mathcal{A}$ is cyclotomic, it is schurian. Therefore $\mathcal{A}^{\langle \varphi \rangle}$ is also schurian by Lemma~\ref{fusion}. However, $\mathcal{A}^{\langle \varphi \rangle}$ coincides with  non-schurian $S$-ring constructed in~\cite[pp.8-11]{EKP2}, a contradiction. Thus, $\varphi$ is not induced by an isomorphism and the proposition is proved.
\end{proof}

\section{Proofs of Theorems~\ref{main1}, \ref{main2}, and~\ref{main3}}

\begin{proof}[Proof of Theorem~\ref{main1}]
The second part of the theorem follows from~\cite[Theorem~1.1]{Ry3}. Prove the first part of the theorem. Let $G$ be a cyclic group of order~$n$ and $G$  weakly separable. From Proposition~\ref{suff} it follows that $\omega(n)\leq 4$. If $\omega(n)=1$ then $n=p^k$ for some prime $p$ and integer $k\geq 1$ and $n$ belongs to the first family. 

Let $\omega(n)=2$. Then 
$$n=p^lq^k~\text{and}~G\cong C_{p^k}\times C_{q^l}$$ 
for some distinct primes $p,q$ and integers $k,l\geq 1$. If $2\notin\{p,q\}$ then $k=1$ or $l=1$. Indeed, if $k>1$ and $l>1$ then $\Omega^{*}(p^k)\geq 2$ and $\Omega^{*}(q^l)\geq 2$ and $G$ is not weakly separable by Proposition~\ref{suff}. So $n$ belongs to the second family. Suppose that $2\in\{p,q\}$. Without loss of generality we may assume that $p=2$. In this case $k\leq 2$ or $l=1$ because otherwise $G$ is not weakly separable by Proposition~\ref{suff}. This yields that $n=2q^l$ or $n=4q^l$ or $n=2^kq$. In the first and third cases $n$ belongs to the second family; in the second case  $n$ belongs to the third family. 

Let $\omega(n)=3$. Then 
$$n=p^kq^lr^m~\text{and}~G\cong C_{p^k}\times C_{q^l}\times C_{r^m}$$ 
for some distinct primes $p,q,r$ and integers $k,l,m\geq 1$. If $2\notin\{p,q,r\}$ then $k=l=m=1$ because otherwise $G$ is not weakly separable by Proposition~\ref{suff}. Therefore $n$ belongs to the fourth family. Suppose that $2\in\{p,q,r\}$. Without loss of generality we may assume that $p=2$. Proposition~\ref{suff} implies that $l=1$ or $m=1$. We may assume by symmetry that $l=1$. If $m\geq 2$ then in view of Proposition~\ref{suff}, we conclude that $k=1$ and $n=2qr^m$ belongs to the third family. If $m=1$ then $k\leq 2$ and hence $n=2qr$ or $n=4qr$. In the former case $n$ belongs to the third family; in the latter case $n$ belongs to fifth family.

Finally, let $\omega(n)=4$. Then 
$$n=p^kq^lr^mt^s~\text{and}~G\cong C_{p^k}\times C_{q^l}\times C_{r^m}\times C_{t^s}$$ 
for some distinct primes $p,q,r,t$ and integers $k,l,m,s\geq 1$. In this case $2\in\{p,q,r,t\}$. Indeed, if $2\notin\{p,q,r,t\}$ then $\Omega^{*}(p^kq^l)\geq 2$ and $\Omega^{*}(r^mt^s)\geq 2$ and $G$ is not weakly separable by Proposition~\ref{suff}. Without loss of generality we may assume that $p=2$. Due to Proposition~\ref{suff}, we have $k=l=m=s=1$ and hence $n=2qrt$. Therefore $n$ belongs to the fifth family. The theorem is proved.
\end{proof}

\begin{proof}[Proof of Theorem~\ref{main2}]
The elementary abelian groups of orders~4, 8, 9, 27 are separable with respect to $\mathcal{K}_A$ by~\cite[Theorem~1.2]{Ry3}. The necessity of the theorem follows from Lemma~\ref{nonsepwr} and Lemma~\ref{nonsep}.
\end{proof}

Before we prove Theorem~\ref{main3}, we provide an auxiliary lemma described Sylow subgroups of an abelian weakly separable group.

\begin{lemm}\label{sylow}
Let $G$ be an abelian weakly separable group and $p$ a prime divisor of $|G|$. If $p\in\{2,3\}$ then $G_p$ is isomorphic to one of the groups $C_{p^k}$, $C_p\times C_{p^k}$, $C_p^3$, where $k\geq 1$. If $p\geq 5$ then $G_p$  is cyclic.
\end{lemm}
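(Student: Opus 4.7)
The plan is to take the contrapositive and invoke Lemma~\ref{nonsepwr}. If some subgroup $H\le G_p\le G$ carries an $S$-ring $\mathcal{B}$ together with an algebraic automorphism $\varphi\in\aut_{\alg}(\mathcal{B})$ that is not induced by a combinatorial isomorphism, then since $G$ is abelian we have $H\trianglelefteq G$, and Lemma~\ref{nonsepwr} lifts $(\mathcal{B},\varphi)$ to an $S$-ring over $G$ with an algebraic automorphism not induced by any isomorphism, contradicting weak separability of $G$. Thus it suffices, for each isomorphism type of $G_p$ forbidden by the statement, to exhibit a subgroup $H\le G_p$ and a ``bad'' pair $(\mathcal{B},\varphi)$ over $H$.

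For $p\ge 5$, suppose $G_p$ is non-cyclic. Then $H=C_p\times C_p\le G_p$, and since $|C_p|=p\ge 5\ge 4$, Lemma~\ref{nonsep} directly supplies the required pair $(\mathcal{B},\varphi)$ over $H$.

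For $p\in\{2,3\}$, assume $G_p$ is none of $C_{p^k}$, $C_p\times C_{p^k}$, $C_p^3$. A quick inspection of the invariant factor decomposition shows that $G_p$ must then contain a subgroup isomorphic to one of $C_{p^2}\times C_{p^2}$, $C_p^4$, or $C_{p^2}\times C_p\times C_p$. In the first case Lemma~\ref{nonsep} applies with $H=C_{p^2}\times C_{p^2}$ since $p^2\ge 4$. In the second, rewrite $C_p^4\cong E_{p^2}\times E_{p^2}$ with $|E_{p^2}|=p^2\ge 4$ and again apply Lemma~\ref{nonsep}. For the mixed case $H=C_{p^2}\times C_p\times C_p$, I would split on the parity of $p$: when $p=3$, decompose $H=C_9\times(C_3\times C_3)$; both factors have order $9$ and $\Omega^*(9)=2$, so Proposition~\ref{suff} directly yields a bad pair over $H$. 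When $p=2$, $H\cong E_4\times C_4$, and the explicit non-separable $S$-ring over $E_4\times C_4$ constructed in~\cite[pp.~10--11]{Ry3} (the very one invoked in the proof of Proposition~\ref{suff}) furnishes the desired $(\mathcal{B},\varphi)$. In each subcase Lemma~\ref{nonsepwr} completes the contradiction.

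The main obstacle is the mixed case $H=C_{p^2}\times C_p\times C_p$. Its $p$-rank is only $3$, so it contains no $K\times K$ with $|K|\ge 4$; consequently Lemma~\ref{nonsep} is unavailable. Moreover, when $p=2$ there is no factorization $H=G_1\times G_2$ with $\Omega^*(|G_i|)\ge 2$ for both $i$, so even Proposition~\ref{suff} cannot be applied uniformly. This is precisely why the case $p=2$ has to be routed through the specific $E_4\times C_4$ construction of~\cite{Ry3}, whereas for $p=3$ the natural splitting $C_9\times E_9$ falls within the scope of Proposition~\ref{suff}.
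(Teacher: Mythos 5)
Your proposal is correct and follows essentially the same skeleton as the paper: pass to the contrapositive, locate a minimal ``bad'' subgroup, and lift via Lemma~\ref{nonsepwr} (normality being automatic since $G$ is abelian). Your list of minimal forbidden configurations $C_{p^2}\times C_{p^2}$, $C_p^4$, $C_{p^2}\times C_p\times C_p$ matches the paper's case analysis (the paper first excludes rank $\geq 4$ via $C_p^4$, then two invariants $\geq p^2$ via $C_{p^2}\times C_{p^2}$, both by Lemma~\ref{nonsep} plus Lemma~\ref{nonsepwr}), and your treatment of $p\geq 5$ via $C_p\times C_p$ is the paper's. The only divergence is the mixed case $C_{p^2}\times C_p\times C_p$: the paper disposes of it for both $p\in\{2,3\}$ by the single citation to~\cite[pp.10--11]{Ry3}, whereas you use that citation only for $E_4\times C_4$ and handle $p=3$ through the splitting $C_9\times E_9$ and Proposition~\ref{suff}; this is a legitimate and arguably self-contained alternative, since the construction in the proof of Proposition~\ref{suff} applies verbatim with $H_1=C_9$, $H_2=E_9$ (the primes $p_i,q_i$ there need not be distinct). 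One small imprecision to fix: the \emph{statement} of Proposition~\ref{suff} only asserts that $C_9\times E_9$ is not weakly separable, which is not literally the hypothesis of Lemma~\ref{nonsepwr} (that needs an algebraic \emph{automorphism} of an $S$-ring over a subgroup of $G$, not merely an algebraic isomorphism to an $S$-ring over some isomorphic group). So either invoke the explicit pair $(\mathcal{A},\varphi)$ built inside the proof of Proposition~\ref{suff}, as you implicitly do, or apply Proposition~\ref{suff} directly to $G$ itself using a decomposition $G=C_{3^{k}}\times M$ with $k\geq 2$ and $C_3\times C_3\leq M$, which gives $\Omega^{*}(|C_{3^k}|)\geq 2$ and $\Omega^{*}(|M|)\geq 2$; either repair is immediate.
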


\begin{proof}
Let $p\in\{2,3\}$. Assume that $G_p$ is the direct product of at least four cyclic groups. Then $G_p$ contains a subgroup isomorphic to $C_p^4$. In this case $G$ is not weakly separable by Lemma~\ref{nonsepwr} and Lemma~\ref{nonsep}, a contradiction. So $G_p$ is isomorphic to $C_{p^k}\times C_{p^l}\times C_{p^m}$ for some $k,l,m\geq 0$. At least two numbers among $k,l,m$ are smaller than $2$ because otherwise $G_p$ contains a subgroup isomorphic to $C_{p^2}\times C_{p^2}$ and $G$ is not weakly separable by Lemma~\ref{nonsepwr} and Lemma~\ref{nonsep}. Without loss of generality we may assume that $l\leq 1$ and $m\leq 1$. If $l=m=0$ then $G_p\cong C_{p^k}$. If $l=0$, $m=1$ or $l=1$, $m=0$ then $G_p\cong C_p\times C_{p^k}$. If $l=m=1$ then $k=1$ by~\cite[pp.10-11]{Ry3} and hence $G_p\cong C_p^3$. 

Now let $p\geq 5$. Assume that $G_p$ is non-cyclic. Then it contains a subgroup isomorphic to $C_p\times C_p$. In this case $G$ is not weakly separable by Lemma~\ref{nonsepwr} and Lemma~\ref{nonsep}, a contradiction. Therefore $G_p$ is cyclic. The lemma is proved.
\end{proof}

\begin{proof}[Proof of Theorem~\ref{main3}]
The second part of the theorem follows from~\cite[Theorem~1]{Ry1} and~\cite[Theorem~1]{Ry2}. Prove the first part of the theorem. Let $G$ be an abelian group of order $n$ and $G$ is  weakly separable. Suppose that $G$ is  neither cyclic nor elementary abelian. Since $G$ is non-cyclic, there exists a prime $p$ such that $G_p$ is non-cyclic. From Lemma~\ref{sylow} it follows that  
$$p\in\{2,3\}~\eqno(4)$$
and $G_p$ is isomorphic to one of the groups
$$C_p\times C_{p^k},C_p^3,~\eqno(5)$$
where $k\geq 1$.

Assume that $\omega(n)\geq 4$. Then $G$ contains a subgroup isomorphic to $C_p\times C_p\times C_q\times C_r\times C_t$ for some distinct primes $q,r,t$. In this case $G$ is not weakly separable by Proposition~\ref{suff}. Therefore $\omega(n)\leq 3$. If $\omega(n)=1$  then $G$ is a $p$-group and $G$ belongs to the first or fifth family by~Lemma~\ref{sylow}. 

Let $\omega(n)=2$. Then there exists a prime $q\neq p$ such that 
$$G=G_p\times G_q.$$ 
Due to~(4), we have $p\in\{2,3\}$. Suppose that $p=2$. If $|G_q|\geq q^2$ then $\Omega^{*}(|G_q|)\geq 2$. Proposition~\ref{suff} yields that $\Omega^{*}(|G_p|)\leq 1$. So $G_p\cong E_4$ by~(5) and hence $G\cong E_4\times C_{q^k}$ for some $k\geq 2$. Therefore $G$ belongs to the third family. If $|G_q|=q$ then from~(5) it follows that $G\cong C_{2q}\times C_{2^k}$ or $G\cong E_4 \times C_{2q}$. In the former case $G$ belongs to the second family; in the latter case $G$ belongs to the fourth family.

Now suppose that $p=3$. If $q\neq 2$ then $|G_q|=q$ because otherwise  $\Omega^{*}(|G_p|)\geq 2$ and $\Omega^{*}(|G_q|)\geq 2$ and $G$ is not weakly separable by Proposition~\ref{suff}. Proposition~\ref{suff} and~(5) imply that $G_p\cong C_3\times C_3$. So $G\cong C_3\times C_3 \times C_q$ and hence $G$ belongs to the seventh family. Let $q=2$. Then $|G_q|\leq 4$ by Proposition~\ref{suff}. If $|G_q|=2$ then in view of Proposition~\ref{suff} and~(5) the group $G$ is isomorphic to $C_6\times C_{3^k}$ or $C_6\times C_3\times C_3$. In the former case $G$ belongs to the sixth family; in the latter case $G$ belongs to the eighth family. If $|G_q|=4$ then from Proposition~\ref{suff} and~(5) it follows that $|G_p|=9$ and hence $G_p\cong C_3\times C_3$. Therefore $G\cong C_3\times C_3\times C_4$ or  $G\cong C_3\times C_3\times C_2 \times C_2$. In the former case $G$ belongs to the eight family; in the latter case $G$ is not weakly separable by Lemma~\ref{nonsep}.

Let $\omega(n)=3$. Then there exist distinct primes $q\neq p$ and $r\neq p$ such that 
$$G=G_p\times G_q\times G_r.$$ 
From~(4) it follows that $p\in\{2,3\}$. Suppose that $p=2$. Since $\Omega^{*}(|G_q\times G_r|)\geq 2$, we conclude that $\Omega^{*}(|G_p|)\leq 1$ and hence $G_p\cong E_4$. Assume that $|G_q|\geq q^2$. Then $\Omega^{*}(|G_q|)\geq 2$ and $\Omega^{*}(|G_p\times G_r|)\geq 2$. So $G$ is not weakly separable by Proposition~\ref{suff}, a contradiction. Therefore $|G_q|=q$. The similar argument implies that $|G_r|=r$. Thus, $G\cong E_4\times C_{qr}$ and hence $G$ belongs to the fourth family. 

Consider the remaining case $p=3$. In this case $\Omega^{*}(|G_p|)\geq 2$ and $\Omega^{*}(|G_q\times G_r|)\geq 2$ and hence $G$ is not weakly separable by Proposition~\ref{suff}, a contradiction. Thus, this case is impossible. The theorem is proved.
\end{proof}

\end{document}